\documentclass[11pt]{article}

\usepackage{amsmath,amssymb,amsfonts}
\usepackage{xcolor}
\usepackage{hyperref}
\usepackage{nicefrac}
\usepackage{enumitem}
\usepackage{fullpage}

\newcommand{\E}{{\rm \bf E}}
\newcommand{\prob}{{\rm \bf P}}
\newcommand{\calF}{{\cal F}}

\newtheorem{theorem}{Theorem}

\newtheorem{lemma}[theorem]{Lemma}

\newtheorem{remark}[theorem]{Remark}

\newenvironment{proof}[1][Proof]{\noindent\textbf{#1.} }{\ \rule{0.5em}{0.5em}}

\date{}

\title{About a ball removal process on bins}

\author{Correa, José%
\footnote{Departamento de Ingeniería Industrial and Ctr.~de Modelamiento Matemático (CNRS IRL2807), Universidad de Chile, Santiago, Chile. \tt{correa@uchile.cl}} \and 
Kiwi, Marcos%
\footnote{Departamento de Ingeniería Matemática and Ctr.~de Modelamiento Matemático (CNRS IRL2807),
Universidad de Chile, Santiago, Chile.
\tt{mk@dim.uchile.cl}} \and 
Livanos, Vasilis%
\footnote{Ctr.~de
Modelamiento Matemático (CNRS IRL2807), Universidad de Chile, Santiago, Chile.
\tt{vas.livanos@gmail.com}} \and 
Solan, Eilon%
\footnote{School of Mathematical Sciences, Tel Aviv University, Israel. \tt{eilonsolan@gmail.com}}
\and 
Solan, Ron%
\footnote{Broad Institute of Massachusetts Institute of Technology and Harvard, Cambridge, MA, USA. \tt{ron.s.solan@gmail.com}}}

\begin{document}

\maketitle 

\begin{abstract}%
We consider a basic balls-and-bins question in which you have to distribute~$n$ balls into~$k$ bins. Then, round by round, a ball is removed from a non-empty bin chosen uniformly at random. The process ends when a
single non-empty bin remains. The goal is to minimize the expected number of remaining balls.
An open problem posed by Will Ma 
asks whether the initial assignment that minimizes the expected number of remaining balls is one that is as balanced as possible.
Using a coupling argument, we answer this conjecture positively, and we discuss the case of non-uniform choice among the non-empty bins.%
\end{abstract}


\smallskip
\noindent
Keywords.
Assortment optimization, inventory control, balls and bins.


\section{Introduction}

You have to distribute $n$ balls into $k$ bins. Then, round by round, a ball is removed from a non-empty bin chosen uniformly at random. The process ends when a
single non-empty bin remains. The goal is to minimize the expected number of remaining balls.
This problem was presented by Will Ma during an open problems session at the Simons Institute for the Theory of Computing.
He conjectured that 
the initial assignments that minimize the expected number of remaining balls are those that distribute the $n$ balls into the $k$ bins as evenly as possible;
in other words, the number of balls between any two bins differs by at most $1$.
The purpose of this note is to affirmatively settle this conjecture.

While this conjecture is very natural and interesting in its own right, one practical motivation comes from assortment optimization. At the beginning of the selling horizon, a seller selects the number of units to stock for each product. 
Each arriving customer makes a choice among the set of products with remaining inventory. The seller's goal
is to pick the stocking quantities to maximize the total expected revenue from the sales net of the stocking cost.
Various approximations of the optimal solution under different assumptions on the data have been provided, e.g., by
\cite{liang2021assortment,
zhang2025leveraging,
mouchtaki2026joint}.
The problem we study is a variation of this model, where products are perfect substitutes, customers are indifferent among them,
and they are happy and make a purchase whenever they have a choice of products. In this case, they randomly purchase one unit of one product. The above conjecture is equivalent to determining how to initially distribute~$n$ items across~$k$ product types to maximize the expected number of happy customers. Although unrealistic as a model of customer behavior, it is arguably 
a particularly simple and mathematically interesting model that is not yet well understood.
It is thus a relevant setting in which to develop tools for addressing more general and realistic models.

\medskip
To formalize the problem, we assume the $k$ bins are labeled by $[k]:=\{1,\dots,k\}$,
and that an initial assignment of balls to bins is given by $\vec{n}:=(n_1,\dots,n_k)\in\mathbb{N}^k$,
where $\mathbb N = \{0,1,\dots\}$. 
Instead of the process implicit in the problem description, we consider the following process:
At each 
round, a bin (possibly empty) is chosen
according to the uniform distribution on $[k]$,
independently of past choices,
and if the bin is non-empty, one ball is removed from it.
Thus, the sample space is~$[k]^\infty$
and the probability distribution $\prob$ is 
the product probability measure under which the coordinates are independent and uniformly distributed on $[k]$.
This representation is independent of the initial assignment,
as a point in the sample space determines the sequence of selected bins.

For each initial assignment $\vec{n}$,
denote by $X_{\vec{n}}$ the 
(random)
number of balls in the last non-empty bin.
Thus, if there are two bins and two balls, then $X_{(1,1)}$ and $X_{(2,0)}$ equal $1$ and $2$, respectively.

We number rounds starting at $0$: round $0$ refers to the initial configuration, and, for $t\geq 1$, the configuration at the end of round $t$ is the configuration after the first $t$ selections.
Let $T(\vec{n})$ denote the first (random)
round $t\geq 0$ at which the modified removal process started from $\vec n$ has a unique non-empty bin.
Note that $X_{\vec{n}}$ is the number of balls remaining at round $T(\vec{n})$ of the modified removal process. 
Crucially, this random variable has the same distribution as the number of balls remaining when the unmodified removal process terminates.
Define
\[
f(\vec{n}):=\mathbb{E}[X_{\vec{n}}],
\quad
\text{for all $\vec n \in \mathbb N^k$}.
\]
We abuse notation and extend, in the natural way, the domain of $f$ to $\bigcup_{m=2}^{\infty}\mathbb{N}^m$.

From the definition, we see that
$f(a,0) = a$, $f(0,b) = b$, and
\begin{equation}\label{eq:recursion}
    f(a,b) = \tfrac12 f(a-1,b)+\tfrac12 f(a,b-1), \quad \text{for all $a,b\in\mathbb{N}\setminus\{0\}$.}
\end{equation}

Our main result is:
\begin{theorem}\label{thm:main}
Fix $n,k \geq 2$. 
The
quantity
$\min\{ f(\vec{n}) \colon \vec{n}\in\mathbb{N}^k, \sum_{i\in [k]} n_i = n \}$
is attained 
only
by assignments $\vec{n}
=(n_1,\dots,n_k)
\in\mathbb{N}^k$ satisfying $\sum_{i\in [k]}n_i=n$ and $|n_i - n_j| \leq 1$ for every $i,j\in [k]$.
\end{theorem}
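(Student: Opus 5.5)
Since every assignment with $\sum_i n_i=n$ that is not balanced has two bins with $n_i\ge n_j+2$, it suffices to prove a strict local improvement. Namely, if $\vec n$ has coordinates $a=n_i$ and $b=n_j$ with $a\ge b+2$, then moving one ball from bin $i$ to bin $j$ strictly decreases $f$. All balanced assignments are permutations of each other and $f$ is symmetric, so they share one value. Repeated transfers strictly decrease $\sum_i n_i^2$, so from any unbalanced $\vec n$ they terminate at a balanced assignment with strictly smaller $f$. This shows the minimum is attained exactly at the balanced assignments.

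To prove the local improvement I will use a coupling. Let $\vec n'$ be $\vec n$ with $(n_i,n_j)=(a,b)$ replaced by $(a-1,b+1)$. Run both processes on the same sequence of selected bins, but swap the labels $i$ and $j$ in the second process at a well-chosen moment. Concretely, let $\tau$ be the first round at which bin $i$ of the $\vec n$-process has exactly one more ball than bin $j$ of the $\vec n'$-process. Up to $\tau$, let selections of $i$ in the first process correspond to selections of $j$ in the second and vice versa. This is a measure-preserving bijection on $[k]^\infty$, so it is a valid coupling. Before $\tau$, the two bins in the first process hold $(a-s,b-r)$ and in the second $(a-1-r,b+1-s)$, where $s$ and $r$ count selections of $i$ and $j$ respectively. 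The "mirror" time is when $a-s$ and $b+1-s$, or $b-r$ and $a-1-r$, line up. After that point the multisets of bin contents agree up to relabelling, and the two outcomes coincide.

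In the paths where no such alignment happens before termination, I must show the $\vec n'$ outcome is at most the $\vec n$ outcome, and strictly smaller with positive probability. I would first reduce to the two-bin function $f(a,b)$ by conditioning on the other bins' trajectories. Alternatively, I would work with the pair-restricted walk: the subsequence of selections among $\{i,j\}$ is a fair $\pm$ walk, independent of the rest. With this reduction, the comparison amounts to convexity-type statements for the two-bin recursion \eqref{eq:recursion}, for example that $f(a,b)$ with $a+b$ fixed is minimized at balance. A simple reflection argument should handle this. Strictness comes from an explicit path in which bin $j$ empties in the original process (ending a potential survivor early) while the other bins die. That path has positive probability and yields a strictly larger remainder for $\vec n$.

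The main obstacle is that the ending is a global event involving all $k$ bins: whether bin $i$ or $j$ survives depends on when the other bins empty. So the two-bin reflection does not directly give a pathwise comparison. I would try to define the reflection time $\tau$ purely from the counts of $i$ and $j$ selections: the first time $s-r$ equals $a-b-1$, after which the contents of bins $i,j$ are swapped relative to each other. Then I would show pathwise that, whenever $\tau$ is not reached before the first of these two bins empties in the first process, the $\vec n'$ configuration is dominated by the $\vec n$ configuration in the sense that the final surviving counts satisfy $X_{\vec n'}\le X_{\vec n}$. I would argue this by a case analysis on which bin empties first and on whether the survivor is one of $i,j$ or another bin. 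That case analysis is where I expect the difficulty.
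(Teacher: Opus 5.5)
Your plan has two genuine gaps.

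\textbf{First gap: the local lemma is false for arbitrary pairs.} You reduce to a lemma that fails as stated. Take $k=3$ and $\vec n=(2,0,3)$, and move a ball from bin $1$ to bin $2$. Then $f(2,0,3)=f(2,3)=\tfrac{15}{8}$, whereas
$f(1,1,3)=\tfrac{2}{3}f(1,3)+\tfrac{1}{3}f(1,1,2)=\tfrac{2}{3}\cdot\tfrac{9}{4}+\tfrac{1}{3}\cdot\tfrac{4}{3}=\tfrac{35}{18}>\tfrac{15}{8}$.
So a transfer from a bin with $a\ge b+2$ balls to one with $b$ balls can \emph{increase} $f$. This is consistent with the failure of Schur-type monotonicity noted in the introduction. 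You must transfer out of a bin of \emph{maximum} load, as the paper does ($n_1\ge n_\ell$ for all $\ell$). That restriction still suffices for your $\sum_i n_i^2$ argument, and maximality is genuinely needed later.

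\textbf{Second gap: the pathwise comparison fails.} You plan to prove $X_{\vec n'}\le X_{\vec n}$ pathwise whenever the mirror time ($s-r=a-b-1$) is not reached. This is false even when the source bin is maximal. Take $\vec n=(3,1,3)$, $\vec n'=(2,2,3)$, and selections $2,2,1,1,3,1$.
\begin{itemize}
\item The $\vec n'$-process is at $(0,0,3)$ after four rounds and stops with $X_{\vec n'}=3$.
\item The $\vec n$-process passes through $(1,0,3)$ and ends at $(0,0,2)$, so $X_{\vec n}=2$.
\item Throughout, $s-r\le 0<1$ until the $\vec n'$-process has stopped.
\end{itemize}
This prefix has positive probability, so no case analysis can give domination there.

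\textbf{The missing idea: compensation between paths.} The comparison must be made on average, by matching bad paths with good ones. The paper stops at the first round where either the mirror condition holds, or exactly two bins are non-empty and one of them holds a single ball.
\begin{itemize}
\item \emph{Bad event.} The two bins are the maximal bin with $1$ ball and a third bin $\ell$ with $r\ge 2$ balls. The $\vec n'$-process has already stopped with $r=f(r,0)$ balls, while the $\vec n$-process has conditional mean $f(r,1)$. This is a loss for $\vec n'$.
\item \emph{Swapped event.} The maximal bin holds $r$ and bin $\ell$ holds $1$. The $\vec n'$-process sits at $(r-1,1)$ or $(r-1,1,1)$. Using the auxiliary inequality $f(r-1,1,1)\le f(r-1,1)$, it gains at least $f(r,1)-f(r-1,1)$.
\item The recursion gives $2f(r,1)=f(r-1,1)+f(r,0)$, so the gain exactly equals the loss.
\end{itemize}
Everything then hinges on showing the swapped event is at least as likely as the bad one, both restricted to paths where the mirror condition never occurs. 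The paper proves this with a measure-preserving involution. It interchanges the selections of the maximal bin and bin $\ell$ after their last tie before the stopping time. The existence of such a tie is exactly where maximality is used.

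None of these ingredients appears in your plan: the second stopping condition, the compensation identity, $f(r,1,1)\le f(r,1)$, or the reflection. The two-bin reduction you suggest cannot supply them because, as you note yourself, termination depends on all bins.
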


The result was previously proved for $k=2,3$ in~\cite{GS24}. 
For $k=3$, the proof argument is a complicated induction (on the initial number of balls $n$) of six mutually dependent claims.
Each claim establishes, under suitable conditions, an inequality
concerning the expected number of balls removed throughout the process.
The complexity of the induction suggests that any generalization would require significant effort and provide little insight.

Independently of us, \cite{zhou2026optimal} proved Theorem~\ref{thm:main} for any $k$,
by embedding the discrete-time process into a continuous-time analytical framework. 
Specifically, the embedding recasts the time at which bins empty into independent Erlang random variables.
This allows reframing the problem as one concerning order statistics of~$k$ independent Erlang variables.
This yields a closed-form expression for the marginal contribution of adding one additional ball to any given bin.
The closed form expression is then analyzed and used to establish monotonicity of the expected number of balls removed throughout the process, with respect to the initial number of balls per bin. This directly translates into the optimality of the balanced initial assignment.
As observed in 
\cite{zhou2026optimal}, one key analytical difficulty in analyzing the process is that the objective function -- the expected number of balls removed -- is not Schur-concave.
Thus, typical optimization techniques are not applicable to the problem under consideration.
Our proof uses a standard coupling argument
and is simpler and significantly shorter.

\section{Proof of the main result}
For completeness, we start by proving  
Theorem~\ref{thm:main} for the case $k=2$, which involves a coupling argument.
\begin{lemma}\label{lem:specCase}
  Fix $n\in\mathbb{N}$. 
  For every $\vec{n}=(n_1,n_2)\in\mathbb{N}^2$ such that $n_1+n_2=n$, 
  \[
  f(\lceil n/2\rceil, \lfloor n/2\rfloor) \le f(n_1,n_2),
  \]
with strict inequality whenever $|n_1 - n_2| > 1$.
\end{lemma}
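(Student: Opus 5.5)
The plan is to reduce the lemma to a single ``one-ball move'' inequality and prove that inequality by induction on $n$, using only the recursion~\eqref{eq:recursion}, the boundary values $f(a,0)=a$ and $f(0,b)=b$, and the symmetry $f(a,b)=f(b,a)$. The symmetry holds because relabelling the two bins preserves $\prob$. Concretely, I will prove
\[
(\ast)\qquad f(a+1,b-1) > f(a,b) \quad \text{for all integers } a\ge b\ge 1 .
\]
Given $(\ast)$, the lemma follows by a chain. By symmetry assume $n_1\ge n_2$, and repeatedly move one ball from bin~$1$ to bin~$2$ while $n_1-n_2\ge 2$. Each move takes $(a+1,b-1)$ to $(a,b)$ with $a\ge b$, so by $(\ast)$ it strictly decreases $f$. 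The chain ends at $(\lceil n/2\rceil,\lfloor n/2\rfloor)$. If $|n_1-n_2|>1$, at least one move is made, which gives the strict inequality.

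I will prove $(\ast)$ by induction on $n=a+b$. The base case is $b=1$: we need $f(a+1,0)=a+1>f(a,1)$. Starting from $(a,1)$, the surviving bin holds either at most $a$ balls (if bin~$1$ survives) or exactly $1$ ball (if bin~$2$ survives). Hence $X_{(a,1)}\le \max(a,1)=a<a+1$ pointwise, since $a\ge b=1$.

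For the inductive step with $b\ge 2$, all four entries of $(a+1,b-1)$ and $(a,b)$ are positive, so~\eqref{eq:recursion} applies to both:
\[
f(a+1,b-1)=\tfrac12 f(a,b-1)+\tfrac12 f(a+1,b-2),\qquad f(a,b)=\tfrac12 f(a-1,b)+\tfrac12 f(a,b-1).
\]
The common term $f(a,b-1)$ cancels, so it suffices to show $f(a+1,b-2)>f(a-1,b)$. Both configurations have $n-1$ balls. If $a-1\ge b$, I apply the induction hypothesis twice at total $n-1$: first with the pair $(a,b-1)$, which gives $f(a+1,b-2)>f(a,b-1)$, and then with the pair $(a-1,b)$, which gives $f(a,b-1)>f(a-1,b)$. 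If instead $a=b$, then symmetry gives $f(a-1,b)=f(a,a-1)$, and one application of the hypothesis with the pair $(a,a-1)$, valid since $a\ge a-1\ge 1$, gives $f(a+1,a-2)>f(a,a-1)$. In every application the second coordinate is at least $1$, because $b\ge 2$. So the hypothesis is always used within its range, and strictness is inherited throughout.

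I expect the main obstacle to be the bookkeeping in this step. Two things need checking: that each invoked instance of $(\ast)$ really satisfies $a\ge b\ge 1$ at total $n-1$, and that the case $a=b$, where the ordering flips, is handled by symmetry rather than by the hypothesis. If the induction turned out to be awkward, the fallback I would try is a direct coupling of the processes from $(a+1,b-1)$ and $(a,b)$ driven by the same selection sequence. Under this coupling the two configurations differ by $(+1,-1)$ until one of them has an empty bin. If bin~$2$ empties first in the process from $(a+1,b-1)$, the base-case argument shows that this process ends with at least as many balls as the other. The opposite event, where bin~$1$ empties first in the process from $(a,b)$, would be paired with the first event via the reflection symmetry of the difference walk. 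I expect the induction to be the cleaner of the two routes.
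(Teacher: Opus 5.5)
Your proof is correct, and it takes a genuinely different route from the paper. Your $(\ast)$ is exactly the one-step inequality the paper records as Remark~\ref{rem:specCase}, but the two proofs of it differ.

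The paper uses a pathwise coupling. Both processes are driven by the same selections. Consider the event that, in the process from $(n_1,n_2)$, bin~1 ever leads bin~2 by exactly one ball. At that round the two configurations are mirror images, so the conditional laws of the final count agree by symmetry. On the complement, bin~1 always leads by at least two. Then the process from $(n_1-1,n_2+1)$ outlives the other and ends with strictly fewer balls. Strictness needs a separate check that this complement has positive probability.

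You instead use only~\eqref{eq:recursion}. Cancelling $f(a,b-1)$ turns a one-ball move at total $n$ into a two-ball move at total $n-1$. That move splits into two one-ball moves, or into one when $a=b$, after using symmetry. Equivalently, $D(a,b):=f(a+1,b-1)-f(a,b)$ satisfies $D(a,b)=\tfrac12 D(a,b-1)+\tfrac12 D(a-1,b)$ for $a>b\ge 2$, and $D(a,a)=\tfrac12 D(a,a-1)$. So $D$ is a nonnegative combination of the boundary gaps $D(m,1)=m+1-f(m,1)\ge 1$, and strictness comes for free. Your index checks are right: every invoked pair satisfies $a'\ge b'\ge 1$ at total $n-1$, and the cases $a-1\ge b$ and $a=b$ exhaust $a\ge b$.

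Your argument is more elementary and needs no sample-path case analysis. The paper's coupling buys something else: it is the template for the $k$-bin proof. Condition~\ref{cond1} in the stopping time $S(\vec n)$ is the same ``leads by exactly one'' event. Your subtraction trick relies on the two-term recursion. For $k\ge 3$ it would produce comparisons in which the donor bin is no longer the largest, or the number of non-empty bins changes, so it does not obviously extend. The fallback coupling you sketch is not needed.
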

\begin{proof}{Proof}
Without loss of generality, assume $n_1\ge n_2$.
If $|n_1-n_2| \leq 1$, then $n_1=\lceil n/2\rceil$, $n_2=\lfloor n/2\rfloor$, and there is nothing to prove.
In what follows, we assume $n_1\geq n_2+2$. 

It suffices to show that 
$f(n_1,n_2)
>
f(n_1-1,n_2+1)$, 
since the result then follows by repeatedly applying this inequality.
To see that the inequality holds, run the process from the initial assignment~$(n_1,n_2)$ and consider a sequence of removals in which eventually, at some round, bin 1 contains exactly one ball more than bin 2.
Denote by $t$ the first round at which this happens,
and by $n'_1$ the number of balls in bin $2$ at the \emph{end} of round $t$ (so that bin $1$ contains $n'_1+1$ at the end of that round). Starting from the initial assignment $(n_1-1,n_2+1)$, the same sequence of removals, up to round~$t$,
leaves bins 1 and 2 with $n'_1$ and $n'_1+1$ balls, respectively. 
By symmetry, in any such sequence of removals, the number of balls remaining when the process stops is identically distributed no matter if the initial assignment is~$(n_1,n_2)$ or $(n_1-1,n_2+1)$. 

Consider now the complementary situation in which a sequence of removals starting with the assignment $(n_1,n_2)$ results in bin $1$ having, throughout all rounds, at least two more balls than bin~2. 
Then, bin 1 is the last non-empty bin. Moreover, on the same sequence of removals, but starting with the assignment $(n_1-1,n_2+1)$, the process takes longer and ends with fewer balls. 
Finally, note that this complementary situation occurs with positive probability.
Indeed, this situation contains, e.g., 
the sequence of chosen bins $(2, 2, \dots, 2)$ of $n_2$ consecutive choices of bin $2$. 
\end{proof}

\begin{remark}\label{rem:specCase}
Lemma~\ref{lem:specCase} in fact yields the stronger statement that $f(n_1,n_2)>f(n_1-1,n_2+1)$ for all $\vec{n}=(n_1,n_2)\in\mathbb{N}^2$ with $n_1\ge n_2+2$.
\end{remark}

\smallskip

A simple property of $f$ is the following:
\begin{lemma}
\label{lemma:new}
$f(r,1,1) \leq f(r,1)$, for every $r \geq 1$,
with strict inequality whenever $r \geq 2$.
\end{lemma}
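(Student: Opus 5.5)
The plan is a first-step analysis on the three-bin configuration, followed by induction on $r$. By the remark in the introduction, the modified process and the original one (a uniformly random \emph{non-empty} bin is chosen) give the same distribution of the final count. So I will analyze the first removal in the original process, exactly as in the derivation of~\eqref{eq:recursion}.

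From $(r,1,1)$ with $r\ge 1$, all three bins are non-empty, and there are two cases.
\begin{itemize}
\item With probability $1/3$ a ball is removed from bin~1, giving $(r-1,1,1)$.
\item With probability $2/3$ one of the two singleton bins is emptied. The configuration is then, up to relabeling, $(r,1)$ with an empty third bin, which behaves exactly like the two-bin process.
\end{itemize}
This gives
\[
f(r,1,1)=\tfrac13 f(r-1,1,1)+\tfrac23 f(r,1), \qquad f(0,1,1)=f(1,1)=1 .
\]
Hence $f(r,1,1)\le f(r,1)$ is equivalent to $f(r-1,1,1)\le f(r,1)$, and strict inequality in one is equivalent to strict inequality in the other.

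Next I need a monotonicity fact for the two-bin function. From~\eqref{eq:recursion} and $f(r,0)=r$,
\[
f(r,1)=\tfrac12 f(r-1,1)+\tfrac12 r .
\]
Since the final count can never exceed the largest initial bin, $f(r-1,1)\le \max(r-1,1)$. For $r\ge 2$ this gives $f(r-1,1)\le r-1<r$, and therefore
\[
f(r,1)-f(r-1,1)=\tfrac12\bigl(r-f(r-1,1)\bigr)>0 .
\]
So $f(r,1)$ is strictly increasing for $r\ge 1$, and the inequality still holds weakly at $r=1$, since $f(0,1)=1=f(1,1)$.

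Now induct on $r$. For the base case $r=1$, we have $f(0,1,1)=1=f(1,1)$, so the recursion gives $f(1,1,1)=1=f(1,1)$, with equality. For the step $r\ge 2$, the induction hypothesis and the strict monotonicity give
\[
f(r-1,1,1)\le f(r-1,1)<f(r,1).
\]
By the equivalence above, this yields $f(r,1,1)<f(r,1)$.

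The only delicate point is justifying the first-step recursion for the three-bin configuration. I must check that emptying a singleton bin leaves a state whose final count has the same law as the two-bin process $(r,1)$. This holds because an empty bin is never chosen again in the original process. All remaining steps are routine.
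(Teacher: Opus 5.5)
Your proof is correct and is essentially the paper's argument: induction on $r$ via the first-step recursion $f(r,1,1)=\tfrac13 f(r-1,1,1)+\tfrac23 f(r,1)$, with the decisive input being $f(r-1,1)<r=f(r,0)$. The only difference is packaging: you state this input as strict monotonicity $f(r-1,1)<f(r,1)$ of the two-bin function, whereas the paper expands $f(r+1,1)$ by the two-bin recursion and applies $f(r,1)<f(r+1,0)$ directly within a single chain of inequalities.
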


\begin{proof}{Proof}
We prove the lemma by induction on $r$.
For $r=1$, we have $f(1,1,1) = f(1,1) = 1$.
Assume that the lemma holds for a certain $r \geq 1$.
Then
\begin{align}
\label{equ:81}
f(r+1,1,1) &=
\frac{1}{3} f(r,1,1) + \frac{2}{3} f(r+1,1)\\
\label{equ:82}
&\leq \frac{1}{3} f(r,1) + \frac{1}{3} f(r,1) + \frac{1}{3} f(r+1,0)\\
\label{equ:83}
&< \frac{1}{2} f(r,1) + \frac{1}{2} f(r+1,0)\\
\label{equ:85}
&= f(r+1,1),
\end{align}
where Eqs.~\eqref{equ:81} and~\eqref{equ:85} hold by the recursion that $f$ satisfies,
Eq.~\eqref{equ:82} holds by the induction hypothesis and  the recursion,
Eq.~\eqref{equ:83} holds since $f(r,1) \leq r < r+1 = f(r+1,0)$.
\end{proof}

We turn to Theorem~\ref{thm:main}.
Fix an initial assignment $\vec{n}$
such that $|n_i-n_j| \geq 2$ for some $i,j$.
We assume throughout that~$n_1 \geq n_j$ for every $j\in [k]$.

Denote by $e_i = (0,\dots,0,1,0,\dots,0)$ the $i$-th standard unit vector in $\mathbb{R}^k$.
By definition, the random variable $X_{\vec{n}-e_1+e_i}$ indicates the number of balls in the last non-empty bin when the initial assignment is $\vec{n}-e_1+e_i$.
With this notation,
we will prove that, for each $i$ such that $n_1\ge n_i+2$,
\[ \E[X_{\vec{n}}] 
>
\E[X_{\vec{n}-e_1+e_i}]. \]
Note that if $n_1=n_i+1$,
then by symmetry $\E[X_{\vec{n}}] = \E[X_{\vec{n}-e_1+e_i}]$.
In what follows, fix $i$ such that~$n_1\geq n_i+2$.

Given an initial assignment, 
for every point in our sample space we can calculate the number of balls in each bin at every round.
It will be convenient to assume that if bin $j$ was selected $k_{j,t}$ times up to round $t$,
and if $k_{j,t} > n_j$,
then at the end of round $t$ the bin contains a negative number of balls, that is, $n_j - k_{j,t}$ balls. 

If $\vec n$ has only one non-empty coordinate, transferring one ball from that bin to bin $i$ strictly decreases the objective
because $f(r-1,1) \leq r-1 < r = f(r,0)$. We may therefore assume henceforth that at least two bins are initially non-empty.

Central to our proof is the following stopping time. Given an initial assignment $\vec n$, let $S(\vec n)$ be the smallest $t\geq 0$ such that, at the end of round $t$, one of the following conditions holds:\footnote{Although $S(\vec n)$
depends on $i$, we opt not to make the dependence explicit in the notation.} 

\begin{enumerate}[label=(\alph*)]
\item\label{cond1} Bin $1$ contains exactly one ball more than bin $i$.
\item\label{cond2} There are exactly two non-empty bins,
and one of them contains exactly one ball.
Denote by~$r \geq 1$ the number of balls in the other bin.
\end{enumerate}
Thus, $S(\vec n)=0$ if 
(at least) 
one of the conditions~\ref{cond1} or~\ref{cond2}
already holds in the initial configuration.%
\footnote{Both conditions are triggered simultaneously if at round $S(\vec n)$ 
bin $i$ contains 0 balls, 
some bin $k\neq 1,i$ contains more than one ball,
bin $1$ contains two balls at the beginning of the round
and is selected at that round,
and all other bins are empty.}
Note  also that~$S(\vec n)$ is defined relative to the process started from $\vec n$, even when it is subsequently used to couple this process with the one started from $\vec n-e_1+e_i$.

From the definition of $S(\vec{n})$ we have $T(\vec{n})\geq S(\vec{n})$. 
We next show that the same lower bound holds for $T(\vec{n}-e_1+e_i)$. We include this result primarily for clarity, to help discard infeasible events.
\begin{lemma} 
$T(\vec{n}-e_1+e_i)\ge S(\vec{n})$.
\end{lemma}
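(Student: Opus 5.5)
We argue by contradiction on a fixed sample path: suppose $T' := T(\vec n - e_1 + e_i) < S(\vec n)$ and show that condition~\ref{cond1} or~\ref{cond2} already holds for the process from $\vec n$ at round $T'$. The two processes see the same sequence of selected bins, and we use the convention that counts may go negative. Let $m_j(t)$ denote the count of bin $j$ at the end of round $t$ in the process from $\vec n$, and $m'_j(t)$ the count from $\vec n-e_1+e_i$. Then
\[
m'_1(t)=m_1(t)-1,\qquad m'_i(t)=m_i(t)+1,\qquad m'_j(t)=m_j(t) \quad (j\ne 1,i),
\]
for every $t$ at which no truncation has yet been triggered. A bin is non-empty iff its count is at least $1$. (Formally, a selected bin loses a ball only if it is non-empty, but before $T'$ and $S(\vec n)$ at least two bins are non-empty, so the processes run in parallel.)

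Key step: for $t<S(\vec n)$, condition~\ref{cond1} fails at every round up to $t$. Initially $m_1(0)-m_i(0)\ge 2$, and each round changes this difference by at most $1$ (only one bin is selected). It therefore never reaches $1$ before $S(\vec n)$, so $m_1(t)\ge m_i(t)+2$ for all $t<S(\vec n)$. Consequently $m'_1(t)=m_1(t)-1\ge m_i(t)+1=m'_i(t)-1$.

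Now suppose $T'<S(\vec n)$, so at round $T'$ exactly one bin is non-empty in the primed process. We split into cases according to which bins are non-empty in the unprimed process at $T'$. Since $m_1\ge m_i+2$, we have $m_1(T')\ge 2$ whenever $m_i(T')\ge 0$, and the unprimed process still has at least two non-empty bins because $T(\vec n)\ge S(\vec n)>T'$.

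\textbf{Case 1:} some bin $j\ne 1,i$ is non-empty. Then $m'_j=m_j\ge1$, so $j$ is the unique non-empty primed bin. This forces $m'_1\le 0$, i.e.\ $m_1\le 1$, which gives $m_i\le -1$. So bin $1$ has at most one ball and bin $i$ is empty in the unprimed process. Since $m_1\ge m_i+2$ and bin $i$ can only have gone nonpositive by removals, consider the case $m_1=1$. Then the unprimed nonempty bins are $1$ (with one ball) and those $j\ne1,i$ with $m_j\ge1$. The primed process has exactly one nonempty bin, $j$, so exactly two unprimed bins are non-empty, one of which (bin $1$) has one ball. This is condition~\ref{cond2}, a contradiction. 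If instead $m_1\le 0$, then bin $1$ is empty in both processes and the non-empty sets coincide, contradicting $T(\vec n)>T'$.

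\textbf{Case 2:} only bins among $\{1,i\}$ are non-empty in the unprimed process, so both are. Then $m_i\ge1$ and $m_1\ge m_i+2\ge 3$, which gives $m'_1\ge2$ and $m'_i\ge2$. Both primed bins are non-empty, contradicting $T'$.

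The main obstacle is bookkeeping the negative-count convention. Here I expect to check that a bin's count never drops below $0$ while removals are only applied to non-empty bins before termination, so counts are effectively nonnegative up to $\min(T',S(\vec n))$, and that the non-empty sets of the two processes differ only through bins $1$ and $i$ as computed above. The case analysis then closes with the contradiction in every case, giving $T(\vec n-e_1+e_i)\ge S(\vec n)$.
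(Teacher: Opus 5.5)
Your proof is correct, and it is organized differently from the paper's.

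\textbf{The paper's route.} It argues forward from the stopping time. It splits according to which condition, (a) or (b), triggers at round $S(\vec n)$, and inspects the configuration at rounds $S(\vec n)-1$ and $S(\vec n)$. From this it concludes that the process from $\vec n-e_1+e_i$ still has two non-empty bins there. It even obtains $T(\vec n-e_1+e_i)>S(\vec n)$ when (a) triggers. Earlier rounds are covered only implicitly, through the fact that the number of non-empty bins never increases. In the (b) case, the claim that termination before $S(\vec n)$ could only occur if bin $1$ holds one ball is stated rather tersely.

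\textbf{Your route.} You work at the hypothetical round $T'<S(\vec n)$ and use only two facts. The first is the invariant $m_1\ge m_i+2$ on $[0,S(\vec n))$, which holds because the signed difference moves by steps in $\{-1,0,1\}$. The second is the exact coordinatewise relation between the two signed configurations. Your cases then do one of three things:
\begin{itemize}
\item exhibit condition (b) for the process from $\vec n$ at round $T'$, contradicting the minimality of $S(\vec n)$;
\item show that the two non-empty sets coincide, contradicting $T(\vec n)>T'$;
\item show that the shifted process has two non-empty bins.
\end{itemize}

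\textbf{What each buys.} Your version handles every round before $S(\vec n)$ at once. It never needs to know which condition triggers, including the simultaneous-trigger situation in the footnote. The cost is that it does not record the finer information about round $S(\vec n)$ itself that the paper's version provides.

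\textbf{A correction to your last paragraph.} The check you propose there is neither true nor needed. Under the signed-count convention, a bin's count goes below $0$ before termination whenever an empty bin is selected. Your own Case~1 uses $m_i\le -1$ at a round $T'<S(\vec n)$. Meanwhile $m'_1=m_1-1$, $m'_i=m_i+1$, $m'_j=m_j$ hold at every round with no proviso, since both processes see the same counts $k_{\ell,t}$. Non-emptiness (count $\ge 1$) is the same whether counts are signed or truncated at zero. So you can drop both the qualifier ``at which no truncation has yet been triggered'' and that closing paragraph. What matters, and what you correctly use, is that condition (a) is read with signed counts, as in the paper's description of $C_a^c$.
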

\begin{proof}{Proof}
If $S(\vec n)=0$, the result is immediate. Assume henceforth that $S(\vec n)\geq 1$.

Take a specific sequence of bins that were selected along the process.
If condition~\ref{cond1} is triggered,
then at round~$S(\vec{n})$, 
bin $1$ has exactly one more ball than bin $i$.
Hence, in the previous round, bin $1$ contained exactly two more balls than bin $i$, and a ball was removed from bin $1$ in round~$S(\vec n)$.
Since condition~\ref{cond2} has not occurred earlier,
there are at least two non-empty bins,
and, if there are exactly two non-empty bins, 
both contain at least two balls.
In both cases, when the initial assignment is~$\vec{n}-e_1+e_i$,
the removal process does not end at round $S(\vec{n})$,
and hence $T(\vec{n}-e_1+e_i) > S(\vec{n})$.

If condition~\ref{cond2} is triggered, 
then $T(\vec{n}-e_1+e_i)$ may be smaller than~$S(\vec{n})$ only if bin $1$ contains one ball at round~$S(\vec{n})$,
so that when the initial assignment is $\vec{n}-e_1+e_i$,
it would contain no balls (if bin $i$ contained more than 1 ball, then for~\ref{cond2} to be triggered, bin 1 must contain 1 ball and condition~\ref{cond1} would have triggered before).
Since condition~\ref{cond2} was not triggered before,
either at round $S(\vec{n})$ the last ball from a \emph{third} bin $j$ has been removed,
or at round $S(\vec{n})$ the penultimate ball from bin $1$ was removed.
In both cases, when the initial assignment is $\vec{n}-e_1+e_i$,
the removal process has not ended before round $S(\vec{n})$,
and hence $T(\vec{n}-e_1+e_i) \geq S(\vec{n})$.
\end{proof}

\medskip
We will consider the following events:
\begin{itemize}
\item 
$C_a$: At round $S(\vec{n})$ condition~\ref{cond1} occurs, that is, bin~1 
 contains exactly one ball more than bin $i$.
 \end{itemize}
Outside $C_a$, when condition~\ref{cond2} first occurs, exactly two non-empty bins remain and one has one ball. We distinguish whether these bins are $\{1,i\}$, neither of them, or $\{1,j\}$ for some $j \not\in \{1,i\}$;
in the last case we further distinguish whether the occupancies are $(1,1)$, $(r,1)$, or $(1,r)$.
Note that since condition $C_a$ does not occur,
bin 1 always contains at least two balls more than bin $i$,
and therefore it cannot be that $i$ but not 1 is among the last two non-empty bins.
We therefore define:
\begin{itemize}
\item 
$F_{1,i}$:
Bins $1$ and $i$ are the last two non-empty bins.
\item
$F_\emptyset$:
The last two non-empty bins include neither $1$ nor $i$.
\item 
$F^{a,b}_{1,j}$ for $j\not\in\{1,i\}$ and $a,b\geq 1$: Bins $1$ and $j$ are the last two non-empty bins, and 
at the first round in which condition~\ref{cond2} is triggered,
bins~$1$ and~$j$ contain $a$ and $b$ balls, respectively.
\end{itemize}
Consequently, the following collection of events partitions the sample space.
\[
\calF = \bigl\{C_a, F_{1,i} \setminus C_a, F_\emptyset \setminus C_a, (F^{1,1}_{1,j} \setminus C_a)_{j \not\in\{1,i\}}, (F^{r,1}_{1,j} \setminus C_a)_{j\not\in\{1,i\}; r\geq 2}, (F^{1,r}_{1,j} \setminus C_a)_{j\not\in\{1,i\}; r\ge 2} \bigr\}.
\]
Note that $\calF$ is defined relative to the stopping time $S(\vec{n})$,
which assumes that the initial assignment is~$\vec{n}$.
Since these events cover the sample space, by the law of iterated expectations, we have
\begin{align}
\nonumber
\E[X_{\vec{n}}] 
&= \prob(C_a)\cdot\E[X_{\vec{n}} \mid C_a] 
+ \prob(F_{1,i} \setminus C_a)\cdot\E[X_{\vec{n}} \mid F_{1,i} \setminus C_a] \\
\label{equ:n}
&+ \prob(F_\emptyset \setminus C_a)\cdot\E[X_{\vec{n}} \mid F_\emptyset \setminus C_a] 
+ 
\sum_{j\not\in\{1,i\}}
\prob(F^{1,1}_{1,j} \setminus C_a)\cdot\E[X_{\vec{n}} \mid F^{1,1}_{1,j} \setminus C_a]\\
&+ \sum_{j\not\in\{1,i\}} \sum_{r \geq 2}\bigl( \prob(F^{r,1}_{1,j} \setminus C_a)\cdot\E[X_{\vec{n}} \mid F^{r,1}_{1,j} \setminus C_a] 
+ \prob(F^{1,r}_{1,j} \setminus C_a)\cdot\E[X_{\vec{n}} \mid F^{1,r}_{1,j} \setminus C_a]\bigr).
\nonumber
\end{align}
An analogous equality holds for $\E[X_{\vec{n}-e_1+e_i}]$.
Note that though the random variable $X_{\vec{n}-e_1+e_i}$ is defined relative to the initial assignment $\vec{n}-e_1+e_i$,
the partition $\calF$ which is used in the analogous equality for $\E[X_{\vec{n}-e_1+e_i}]$ is defined relative to the initial assignment $\vec{n}$.
This coupling constitutes the core of our proof strategy.

Next, we show that the first four summands in~\eqref{equ:n} are at least as large as the corresponding summands in the analogous expression for $\E[X_{\vec{n}-e_1+e_i}]$.
\begin{lemma}\label{lem:fourTerms}
The following hold:
\begin{enumerate}[label=(\roman*)]
\item\label{itm:term1} $\E[X_{\vec{n}} \mid C_a] = \E[X_{\vec{n}-e_1+e_i} \mid C_a]$.
\item\label{itm:term2} $\E[X_{\vec{n}} \mid F_{1,i}\setminus C_a] > \E[X_{\vec{n}-e_1+e_i} \mid F_{1,i}\setminus C_a]$.  
\item\label{itm:term3} $X_{\vec{n}} = X_{\vec{n}-e_1+e_i}$ on $F_\emptyset\setminus C_a$.
\item\label{itm:term4} $X_{\vec{n}} = X_{\vec{n}-e_1+e_i} = 1$ on $F^{1,1}_{1,j} \setminus C_a$, for every $j \not\in \{1,i\}$.
\end{enumerate}
\end{lemma}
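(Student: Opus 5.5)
The approach is to use the coupling in which both processes, started from $\vec n$ and from $\vec n-e_1+e_i$, see the same sequence of selected bins. Up to round $S(\vec n)$ the two configurations differ only in bins $1$ and $i$: bin $1$ has one fewer ball and bin $i$ one more in the second process. By the preceding lemma, neither process has terminated strictly before $S(\vec n)$. Each item then follows by comparing the two configurations at round $S(\vec n)$ and applying the strong Markov property: conditionally on the history up to $S(\vec n)$, the future selections are still i.i.d.\ uniform. Hence the conditional expectation of the remaining count equals $f$ evaluated at the configuration at round $S(\vec n)$.

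For \ref{itm:term1}, on $C_a$ at round $S(\vec n)$ bin $1$ holds $m+1$ balls and bin $i$ holds $m$ in the first process. In the second process bin $1$ holds $m$ and bin $i$ holds $m+1$, while all other bins agree. I will exclude the case in which the second process has already terminated at $S(\vec n)$, using the previous lemma (there it was shown that $T(\vec n-e_1+e_i)>S(\vec n)$ under condition~\ref{cond1}). The two configurations are then permutations of one another, obtained by swapping coordinates $1$ and $i$. By symmetry of $f$ under permutations, the conditional expectations agree for each history. Integrating over $C_a$ gives equality.

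For \ref{itm:term2}, on $F_{1,i}\setminus C_a$ only bins $1$ and $i$ are nonempty at round $S(\vec n)$, and bin $1$ exceeds bin $i$ by at least $2$. Condition~\ref{cond2} then forces bin $i$ to hold exactly one ball. So the first process is at $(r,1)$ with $r\ge 3$ and the second at $(r-1,2)$, other bins empty; the second process is still running. By Remark~\ref{rem:specCase}, $f(r,1)>f(r-1,2)$. Averaging over this event, which has positive probability whenever it is nonempty, gives strict inequality. Some care is needed with the negative-count convention and with the possibility that condition~\ref{cond2} is met with bin $1$ holding one ball. That is impossible here, because bin $1$ exceeds bin $i\ge1$ by at least $2$. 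If the event is empty the statement is vacuous, which can be handled by interpreting the conditioning accordingly.

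For \ref{itm:term3}, on $F_\emptyset\setminus C_a$ bins $1$ and $i$ are empty at round $S(\vec n)$ in the first process, so bin $1$ has been selected at least $n_1$ times. Bin $i$ emptied strictly before bin $1$ since bin $1$ always exceeds it by at least $2$, so bin $i$ was selected at least $n_i+2$ times. In the second process bin $1$ therefore also holds $\le0$ balls and bin $i$ holds $\le n_i+1-(n_i+2)<0$. All other bins coincide, so from $S(\vec n)$ on the two processes evolve identically and $X$ agrees.

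Item \ref{itm:term4} works the same way. On $F^{1,1}_{1,j}\setminus C_a$ the first process is at bins $1$ and $j$ holding one ball each, so the conditional result is $1$. In the second process bin $1$ is empty, bin $j$ has one ball, and bin $i$ is empty by the same counting as above. So the second process has terminated with exactly one ball, and $X=1$ in both.

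The main obstacle is this bookkeeping of the bin-$i$ count in the second process, namely verifying that it is nonpositive whenever bin $i$ is empty in the first process. It relies on the invariant that outside $C_a$ bin $1$ leads bin $i$ by at least $2$ up to round $S(\vec n)$.
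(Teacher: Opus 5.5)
Your proposal is correct and is essentially the paper's argument: you couple the two processes through a common selection sequence and read each item off the two configurations at round $S(\vec n)$, with one small variation in (iv), where you combine $T(\vec n-e_1+e_i)\ge S(\vec n)$ with the counts at round $S(\vec n)$ instead of inspecting round $S(\vec n)-1$ (bins $1$, $j$ and some $\ell\neq i$ holding one ball each) as the paper does. Your caveat about an empty conditioning event in (ii) is well founded: $F_{1,i}\setminus C_a=\emptyset$ exactly when $n_i=0$, since bin $i$ then never holds a ball in the process started from $\vec n$, and the paper overlooks this case when it later asserts $\prob(F_{1,i}\setminus C_a)>0$.
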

\begin{proof}{Proof}
On $C_a$, 
bin $1$ contains one more ball than bin $i$ at round $S(\vec{n})$.
Hence, when the initial assignment is~$\vec{n}-e_1+e_i$,
bin $1$ contains one ball fewer,
while bin $i$ contains one additional ball at round $S(\vec{n})$.
By symmetry, $X_{\vec{n}}$ conditioned on~$C_a$ and~$X_{\vec{n}-e_1+e_i}$ conditioned on $C_a$ are identically distributed, and Item~\ref{itm:term1} follows.

On $F_{1,i} \setminus C_a$ 
the last two non-empty bins are $1$ and $i$ and condition~\ref{cond2} triggers at round $S(\vec{n})$.
Hence,
bin $1$ contains at least two more balls than bin $i$, which in turn contains one ball.
By  Remark~\ref{rem:specCase}, we have $f(m,1)>f(m-1,2)$ for every $m> 2$, and therefore Item~\ref{itm:term2} follows.

On $F_\emptyset \setminus C_a$, 
bin $1$ is empty, and hence bin $i$ contains a negative number of balls.
Even if we transferred 
in the initial assignment
a ball from bin $1$ to bin $i$,
it would not affect
the identity of the last two bins.
This establishes Item~\ref{itm:term3}.

On $F^{1,1}_{1,j} \setminus C_a$,
we have $X_{\vec{n}} = 1$.
Immediately before round $S(\vec{n})$, for some $\ell\not\in\{1,j\}$, bins~$1$,~$j$,~$\ell$ must each have contained one ball.
Indeed, if bins $1$ and $j$ had contained two and one balls,
respectively, or one and two balls, respectively, then
condition~\ref{cond2} would have occurred earlier.
Moreover, the additional non-empty bin~$\ell$
cannot be $i$, since condition~\ref{cond1} was not triggered in an earlier round.
Hence, in this case~$X_{\vec{n}-e_1+e_i}=1$.
As a result, Item~\ref{itm:term4} holds. 
\end{proof}

\medskip
Recall that $k_{\ell,t}$ denotes the number of times bin $\ell$ is selected in the first $t$ rounds and that $f(a,b)=f(b,a)$ by symmetry.
Observe that for $r \geq 2$:
\begin{align}
\E[X_{\vec{n}} \mid F^{r,1}_{1,j} \setminus C_a] =
\E[X_{\vec{n}} \mid F^{1,r}_{1,j} \setminus C_a] & = f(r,1), \notag \\
\E[X_{\vec{n}-e_1+e_i}  \mid F^{r,1}_{1,j} \setminus C_a, k_{i,S(\vec{n})}> n_i] &= f(r-1,1),  \label{eq:fourIds} \\
\E[X_{\vec{n}-e_1+e_i} \mid F^{r,1}_{1,j} \setminus C_a, k_{i,S(\vec{n})}= n_i] &= f(r-1,1,1), \notag \\
\E[X_{\vec{n}-e_1+e_i} \mid F^{1,r}_{1,j} \setminus C_a] &= f(r,0). \notag
\end{align}
Regarding the last equality,
on $F^{1,r}_{1,j} \setminus C_a$ bin $1$ contains one ball at round $S(\vec{n})$, and since~$C_a$ does not hold,
bin $i$ contains a negative number of balls.
Moreover, since condition~\ref{cond2} is triggered,
there are two possibilities for what happened at round $S(\vec n)$:
either a ball was removed from bin~1,
or a ball was removed from some bin $\ell \not\in \{1,j,i\}$.
In both cases, when the initial assignment is~$\vec n - e_1 + e_i$,
at the beginning of round $S(\vec n)$ there are $r$ balls in bin $j$ and one ball in another bin (1 or $\ell$),
and at the end of that round that other bin is empty.

Since $\prob(F_{1,i}\setminus C_a) > 0$,
Lemma~\ref{lem:fourTerms} implies that, to prove $\E[X_{\vec{n}}] 
>
\E[X_{\vec{n}-e_1+e_i}]$, it suffices to show that for every~$j\not\in\{1,i\}$ and every $r\ge 2$,
\begin{align}\label{equ:86}
& \prob(F^{r,1}_{1,j} \setminus C_a)\cdot\E[X_{\vec{n}} \mid F^{r,1}_{1,j} \setminus C_a] 
+ \prob(F^{1,r}_{1,j} \setminus C_a)\cdot\E[X_{\vec{n}} \mid F^{1,r}_{1,j} \setminus C_a] 
\notag \\ 
& \qquad \ge 
\prob(F^{r,1}_{1,j} \setminus C_a)\cdot\E[X_{\vec{n}-e_1+e_i} \mid F^{r,1}_{1,j} \setminus C_a] 
+ \prob(F^{1,r}_{1,j} \setminus C_a)\cdot\E[X_{\vec{n}-e_1+e_i} \mid F^{1,r}_{1,j} \setminus C_a].
\end{align}
Fix then $j \not\in \{1,i\}$.
By Lemma~\ref{lemma:new}, 
for every $r \geq 2$ we have
$f(r-1,1,1)\leq f(r-1,1)$, 
and hence,
by~\eqref{eq:fourIds}, Eq.~\eqref{equ:86} is implied by
\[
\bigl(\prob(F^{r,1}_{1,j} \setminus C_a)+ \prob(F^{1,r}_{1,j}
 \setminus C_a
 )\bigr)\cdot f(r,1) \ge
\prob(F^{r,1}_{1,j} \setminus C_a)\cdot f(r-1,1)
+ \prob(F^{1,r}_{1,j} \setminus C_a)\cdot f(r,0).
\]
By~\eqref{eq:recursion} and because $f(r-1,1)<r=f(r,0)$ for every $r \geq 2$, the inequality above holds provided $\prob(F^{r,1}_{1,j}\setminus C_a)\ge\prob(F^{1,r}_{1,j}\setminus C_a)$.
Thus, all that remains to establish Theorem~\ref{thm:main} is to show this last inequality. We do so in what follows.

Note that $n_\ell - k_{\ell,t}$
is the number of balls remaining in bin $\ell$ 
at the end of round $t$ 
(possibly becoming negative).
Let~$S_b$ denote the first round at which condition~\ref{cond2} holds, that is, the
first round at which exactly two bins are non-empty and one of them contains exactly one ball. 
With the above notation,
\begin{equation}\label{eq:Cac}
C_a^c=\bigl\{n_1-k_{1,t}\ge n_i-k_{i,t}+2 \text{ for every }0\le t\le S_b\bigr\}.
\end{equation}
On $C^c_a$ we have $S(\vec n)=S_b$.

We now define the reflection map 
$\Psi\colon [k]^{\infty}\to [k]^{\infty}$.
Informally,
this map swaps the selections of bins 1 and $j$ after the last round in which they contain the same number of balls, thereby sending a path in $F_{1,j}^{1,r}$ to a path in $F_{1,j}^{r,1}$ while preserving the event $C_a^c$.
Formally,
for $\omega\in [k]^{\infty}$ with~$S_b(\omega)<\infty$, let
\[
\tau(\omega):=\max\bigl\{t \colon 0\le t\le S_b(\omega), n_1-k_{1,t}(\omega)=n_j-k_{j,t}(\omega)\bigr\}
\]
whenever this set is non-empty, and set $\Psi(\omega):=\omega$ otherwise. When $\tau$ is
defined, let $\Psi(\omega)$ be the point obtained from~$\omega$ by interchanging the
symbols $1$ and $j$ in every coordinate $\omega_t$ with $t>\tau(\omega)$, all other
coordinates (in particular, all selections of bin $i$) being left unchanged.

\begin{lemma}\label{lem:involution}
The map 
$\Psi$ is a measure-preserving involution of $([k]^{\infty},\prob)$.
\end{lemma}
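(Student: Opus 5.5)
The plan is to show two things: first, that $\Psi(\Psi(\omega))=\omega$ for every $\omega$; second, that $\Psi$ preserves the product measure. The key observation is that the swap of symbols $1$ and $j$ after time $\tau$ leaves $\tau$ and $S_b$ unchanged. Once that is established, involutivity is immediate and measure preservation reduces to a cylinder-set computation.

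\textbf{Invariance of $S_b$ and $\tau$.} Fix $\omega$ with $S_b(\omega)<\infty$ and $\tau(\omega)$ defined, and write $\omega'=\Psi(\omega)$.

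1. For $t\le\tau$ the two sequences agree, so all bin counts agree.

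2. For $t>\tau$, the counts $k_{\ell,t}$ for $\ell\notin\{1,j\}$ are unchanged.

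3. At time $\tau$ bins $1$ and $j$ hold equal numbers of balls. Afterwards, each increment of $k_{1,\cdot}$ in $\omega$ becomes an increment of $k_{j,\cdot}$ in $\omega'$, and vice versa. Hence for $t\ge\tau$ the content of bin $1$ under $\omega'$ equals the content of bin $j$ under $\omega$, and conversely. In words, the configuration under $\omega'$ is obtained from that under $\omega$ by relabelling bins $1$ and $j$.

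Condition~\ref{cond2} (exactly two non-empty bins, one of which holds exactly one ball) is invariant under relabelling of bins. Therefore $S_b(\omega')=S_b(\omega)$.

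Next consider the set defining $\tau$. It asks whether the contents of bins $1$ and $j$ are equal, which is symmetric in the two bins, so the set is the same for $\omega$ and $\omega'$ on $[0,S_b]$. Consequently $\tau(\omega')=\tau(\omega)$.

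If the set is empty, or $S_b=\infty$, then $\Psi$ is the identity. This property is clearly also preserved, because on $[0,\tau]$, and indeed on all of $[0,S_b]$, the emptiness of the set depends only on the symmetric data. So both cases are stable under $\Psi$.

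\textbf{Involution.} Applying $\Psi$ to $\omega'$ swaps the symbols $1$ and $j$ after the same time $\tau$, which recovers $\omega$. Hence $\Psi\circ\Psi=\mathrm{id}$. This also shows that $\Psi$ is a bijection.

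\textbf{Measure preservation.} It suffices to check the measure of cylinder sets $A=\{\omega:\omega_1=a_1,\dots,\omega_m=a_m\}$, since these generate the $\sigma$-algebra and form a $\pi$-system.

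1. Partition $A$ according to the value of the pair $(\tau,S_b)$, restricted to the event $\{S_b\le m\}$, together with the remaining part.

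2. For $\omega$ with $S_b(\omega)\le m$, the value of $\tau$ is determined by $\omega_1,\dots,\omega_m$. On such pieces $\Psi$ maps $A$ bijectively onto another cylinder of length $m$, namely the prefix with symbols swapped after $\tau$. The tail coordinates are swapped by the fixed permutation $(1\,j)$ of $[k]$, which preserves the uniform measure. So the measure is preserved on each of these pieces.

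3. The remaining part is where $S_b>m$, or $S_b=\infty$. The approach here is to condition on the first $m$ coordinates and decompose further according to the finite stopping time $S_b$. The map is then a coordinatewise permutation beyond a stopping time that is itself invariant under $\Psi$.

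The cleanest way to organise this is to write $[k]^\infty$ as a disjoint union over the finite prefixes $(\omega_1,\dots,\omega_{S_b})$ with given $\tau$, plus the $\Psi$-fixed set. On each such piece, $\Psi$ acts as the following map: it takes the prefix to its swapped version, which has the same length and hence the same probability $k^{-S_b}$, and it applies $(1\,j)$ coordinatewise to the i.i.d.\ uniform tail. Both operations preserve $\prob$.

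\textbf{Main obstacle.} The hard step is the careful verification that $\tau$ and $S_b$ are $\Psi$-invariant. In particular one must check that negative counts, which can occur in bin $i$ but also in bins $1$ and $j$ after they empty, cause no trouble. They do not, because condition~\ref{cond2} refers only to bins with positive contents, and the relabelling identity holds for the signed counts as well.
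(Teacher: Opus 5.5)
Your proposal is correct and follows essentially the same route as the paper: $\Psi$ is an involution, and on each piece of a countable partition it acts as the fixed relabelling $(1\,j)$ of all coordinates after a fixed time, which preserves the product measure. The only differences are inessential: you explicitly verify that $S_b$ and $\tau$ are invariant under $\Psi$, which the paper dismisses as clear, and you partition by prefixes up to the stopping time $S_b$ rather than by the level sets $\{\tau=s\}$.
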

\begin{proof}{Proof}
The map $\Psi$ is clearly an involution, and, in particular, a bijection.
Moreover, $\Psi$ is measure preserving.
Indeed, 
for each $s \geq 0$,
on the set $\{\tau=s\}$ the function $\Psi$ interchanges the symbols 1 and $j$ after time $s$,
and hence is measure preserving;
and on the set where $\tau$ is undefined, $\Psi$ is the identity.
\end{proof}

\begin{lemma}\label{lem:conjAlt}
For all $j\not\in\{1,i\}$ and all $r\geq 2$, we have
$\prob(F_{1,j}^{r,1}\setminus C_a) \geq \prob(F_{1,j}^{1,r}\setminus C_a)$.
\end{lemma}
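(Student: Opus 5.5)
The plan is to show that the reflection map $\Psi$ of Lemma~\ref{lem:involution} sends $F^{1,r}_{1,j}\setminus C_a$ into $F^{r,1}_{1,j}\setminus C_a$. Since $\Psi$ is a measure-preserving bijection, this gives
\[
\prob(F^{1,r}_{1,j}\setminus C_a)=\prob\bigl(\Psi(F^{1,r}_{1,j}\setminus C_a)\bigr)\le \prob(F^{r,1}_{1,j}\setminus C_a).
\]
Fix $\omega\in F^{1,r}_{1,j}\setminus C_a$ and write $D_t:=(n_1-k_{1,t})-(n_j-k_{j,t})$. We use three facts about $D_t$:
\begin{enumerate}[label=(\arabic*)]
\item $D_0=n_1-n_j\ge 0$, because bin $1$ initially has a maximum number of balls.
\item $D_{S_b}=1-r<0$, because at round $S_b$ bin $1$ holds one ball and bin $j$ holds $r\ge 2$.
\item $D_t$ changes by at most $1$ per round.
\end{enumerate}
Hence $D_t$ vanishes at some $t\le S_b$, so $\tau(\omega)$ is defined. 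Moreover, $D_t<0$ for all $\tau<t\le S_b$, by maximality of $\tau$.

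The first key step is to describe the bin trajectories under $\omega':=\Psi(\omega)$. Since $D_\tau=0$ and after round $\tau$ the selections of bins $1$ and $j$ are swapped, for every $t\ge\tau$ the content of bin $1$ under $\omega'$ equals the content of bin $j$ under $\omega$, and vice versa. Before round $\tau$ nothing changes, and all other bins, in particular bin $i$, have the same trajectories at all times. So at every round the multiset of bin contents is the same under $\omega$ and $\omega'$.

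From this we draw three conclusions.
\begin{itemize}
\item The round at which condition~\ref{cond2} first holds is the same, so $S_b(\omega')=S_b(\omega)$.
\item At that round the two non-empty bins are again $1$ and $j$, but now with $r$ and $1$ balls respectively.
\item The difference process for $\omega'$ is $D_t$ for $t\le\tau$ and $-D_t$ afterwards. Its zero set in $[0,S_b]$ is therefore unchanged, so $\tau(\omega')=\tau(\omega)$; this is consistent with $\Psi$ being an involution.
\end{itemize}

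The second key step, and the main obstacle, is checking that $\omega'$ still lies in $C_a^c$, i.e.\ that the inequality in~\eqref{eq:Cac} holds for all $t\le S_b$. For $t\le\tau$ this is immediate, since the paths coincide. For $\tau<t\le S_b$, bin $i$ is unchanged, and the content of bin $1$ under $\omega'$ is the content of bin $j$ under $\omega$. That content strictly exceeds the content of bin $1$ under $\omega$, because $D_t<0$, and the latter is at least $n_i-k_{i,t}+2$ since $\omega\in C_a^c$. Thus the inequality persists, and this is exactly where we use that the reflection happens at the last crossing, after which bin $j$ stays strictly ahead of bin $1$.

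Finally, we must make sure that negative bin contents and the boundary cases cause no trouble. Non-emptiness depends only on positivity of contents, which is preserved by the multiset identity. The simultaneous-trigger case of the footnote is excluded because we work outside $C_a$, where $S(\vec n)=S_b$. This shows $\Psi(F^{1,r}_{1,j}\setminus C_a)\subseteq F^{r,1}_{1,j}\setminus C_a$, and Lemma~\ref{lem:involution} then yields the claimed inequality.
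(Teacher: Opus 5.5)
Your proposal is correct and follows the paper's proof. You use the same reflection $\Psi$ at the last crossing time $\tau$, the same two inclusions $\Psi(F^{1,r}_{1,j}\setminus C_a)\subseteq C_a^c$ (bin $j$ stays strictly ahead of bin $1$ after $\tau$) and $\Psi(F^{1,r}_{1,j})\subseteq F^{r,1}_{1,j}$, and then conclude by measure preservation and injectivity of $\Psi$. Your observation that $\Psi$ leaves the multiset of bin contents unchanged, so that $S_b(\Psi(\omega))=S_b(\omega)$ and $\tau(\Psi(\omega))=\tau(\omega)$, spells out a point the paper uses only implicitly.
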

\begin{proof}{Proof}
Note that $\tau$ is defined on $F_{1,j}^{1,r}$.
Indeed, 
$n_1 \geq n_j$, and 
on this event
in round $S_b$ bin 1 contains less balls than bin $j$,
so there must be a round in which the two bins contain the same number of balls.
We next
assert that $\Psi(F^{1,r}_{1,j}\setminus C_a)\subseteq C^c_a$.
Indeed, let $\omega\in F^{1,r}_{1,j}\cap C^c_a$.
Observe that:
\begin{itemize}
\item 
Since $\omega$ and $\Psi(\omega)$ coincide
up to time $\tau$
and condition~\ref{cond1} is not triggered along $\omega$ up to that time (because $\omega\in C_a^c$),
it is not triggered along $\Psi(\omega)$ up to time $\tau$ as well.
\item 
Since $\omega\in F_{1,j}^{1,r}$,
in each round between $\tau+1$ and~$S_b$, bin 1 contains more balls along~$\Psi(\omega)$ than along~$\omega$. This is because $\tau$ is the last round in which the two bins contain the same number of balls, and along $\omega$, at time~$S_b$ bin $j$ contains more balls than bin $1$.
Hence, if condition~\ref{cond1} is not triggered along $\omega$ between those rounds,
it is not triggered also along~$\Psi(\omega)$.
\end{itemize}
These two items imply that
$\Psi(\omega)\in C_a^c$, thus establishing the assertion.

Next, we assert that $\Psi(F_{1,j}^{1,r})\subseteq F_{1,j}^{r,1}$. 
Indeed, 
fix $\omega \in F_{1,j}^{1,r}$.
At time $\tau(\omega)$ bins 1 and~$j$ contain the same number of balls, say $z$.
Since $\omega \in F_{1,j}^{1,r}$, along $\omega$, between rounds $\tau(\omega)+1$ and $S_b(\omega)$, the number of rounds in which bins 1 and~$j$ have been selected is $z-1$ and $z-r$, respectively.
Since up to time $\tau(\omega)$ both $\omega$ and $\Psi(\omega)$ coincide,
and after that time the symbols 1 and $j$ are swapped,~$\Psi(\omega) \in F_{1,j}^{r,1}$.

The two established claims imply $\Psi\bigl(F^{1,r}_{1,j}\setminus C_a\bigr) \subseteq F^{r,1}_{1,j}\setminus C_a$.
To conclude, observe that
\[
\prob\bigl(F^{1,r}_{1,j}\setminus C_a\bigr)
=\prob\bigl(\Psi\bigl(F^{1,r}_{1,j}\cap C_a^c\bigr)\bigr)
\le 
\prob\bigl(F^{r,1}_{1,j}\setminus C_a\bigr),
\]
where the first equality is because $\Psi$ is measure preserving and injective (Lemma~\ref{lem:involution}) and the inequality is by monotonicity of $\prob$.
\end{proof}

\section{Non-uniform selection of bins}
In the case where the bins from which balls are removed are chosen non-uniformly, it is natural to ask whether the optimal initial assignment of balls to bins is proportional to the probability of picking each bin. The goal of this section is to show that this is not the case even for the two-bin scenario.

Consider the case where $k=2$ and $n=6$.
We claim that the initial assignment $(5,1)$ is not optimal when, at each round, bins 1 and 2 are chosen with probabilities $\nicefrac{5}{6}$ and $\nicefrac{1}{6}$, respectively. 
Note that the assignment $(5,1)$ is perfectly proportional to the selection probabilities.

Let $f_p(a,b)$ denote the expected number of balls remaining when the process stops, starting from the initial assignment~$(a,b)$, when bin 1 is selected with probability $p$ and bin $2$ with probability~$1-p$ at each round.
To achieve this section's stated goal, it suffices to show that $f_{\nicefrac{5}{6}}(4,2) < f_{\nicefrac{5}{6}}(5,1)$. That is, the perfectly proportional initial assignment is not optimal: moving one ball from the heavier bin to the lighter bin reduces the expected number of balls in the last non-empty bin.

First, note that $f_p(a,b)$ satisfies the following recursion:
\begin{equation*}
f_p(a,b) = \begin{cases}
a, & \text{if $b=0$,} \\
b, & \text{if $a=0$,} \\
pf_p(a-1,b)+(1-p)f_p(a,b-1), & \text{otherwise.}
\end{cases}
\end{equation*}
Using this relation, one can verify that the following expression holds for $b=1$:
\[
f_p(a,1) = a - \frac{p-p^a}{1-p} \quad\text{for $a>0$}.
\]
Similarly, one can verify that
for $b=2$:
\[
f_p(a,2) = a + 2 - \frac{2}{1-p} + p^a\left(a + 2 + \frac{2p}{1-p}\right) \quad \text{for $a\in\mathbb{N}$}.
\]
Simple calculations show that 
\[ f_{\nicefrac{5}{6}}(5,1) = \frac{3125}{1296} \approx 2.41, \ \ \ f_{\nicefrac{5}{6}}(4,2) = \frac{139}{81} \approx 1.72. \]
In particular, the proportional initial assignment $(5,1)$ is not optimal.

While the optimal allocation is not balanced,
we conjecture that for every number of bins,
the~$L_\infty$ distance between the optimal allocation and the balanced allocation is $O(\sqrt{n})$.

\paragraph{Acknowledgment.}
We thank Bernardo Subercaseaux for calling to our attention the problem addressed in this manuscript and for several insightful conversations. 
We also thank Will Ma and Elchanan Mossel for useful discussions and pointers to the literature.
The authors acknowledge the use of Claude (Anthropic - July 2026) in identifying an error in an earlier proof of Lemma~\ref{lem:conjAlt} and in developing the argument given here.

Part of this work was done while the third author was a postdoc and the fourth author was a visitor at the Centro de Modelamiento Matemático (CMM) at Universidad de Chile.

\paragraph{Funding.}
This work was partially supported by the Israel Science Foundation, Grant \#211/22, and by ANID Chile through the Programa de Investigación Asociativa (PIA) Project FB210005  and FONDECYT 1260036.

\bibliographystyle{alpha}

\newcommand{\etalchar}[1]{$^{#1}$}

\end{document}